\newtheorem{theorem}{Theorem}
\newtheorem{lemma}[theorem]{Lemma}
\newtheorem{proposition}[theorem]{Proposition}
\newtheorem*{remark}{Remark}
\crefname{theorem}{Theorem}{Theorems}
\crefname{lemma}{Lemma}{Lemmas}
\crefname{corollary}{Corollary}{Corollaries}
\crefname{proposition}{Proposition}{Propositions}
\crefname{conjecture}{Conjecture}{Conjectures}
\theoremstyle{definition}
\numberwithin{theorem}{section}
\newcommand{\IC}{\mathrm{IC}}
\title{A Vanishing Theorem for Varieties with Finitely Many Solvable Group Orbits}
\author{Yiyu Wang}
\address{Department of Mathematics, University of Wisconsin - Madison, 480 Lincoln Drive, 213 Van Vleck Hall, Madison, WI 53706}
\email{yiyu.wang@wisc.edu}
\date{\today}
\begin{document}

\begin{abstract}
    We reprove and generalize a result proved by \cite{Yavin1992} that the intersection cohomology groups of a toric variety with coefficient in a nontrivial rank one local system vanish. We prove a similar vanishing result for a certain class of varieties on which a connected linear solvable group acts, including all spherical varieties.
\end{abstract}

\maketitle

\section{Introduction}
Intersection homology was first introduced by Goresky and Macpherson to fix the failure of Poincare duality and to achieve other important topological results like Lefschetz hyperplane theorem for singular spaces. The intersection cohomology groups of a toric variety played an important role in Stanley's proof of the final part of McMullen's $g$-conjecture. These groups also occurred when calculating the decomposition theorem for a proper toric map between toric varieties (see \cite{deCataldoMiglioriniMustata}). The intersection cohomology groups with twisted coefficient are essential in the theory of mixed Hodge module and the decomposition theorem.

In this paper, we will reprove and generalize the following theorem in \cite{Yavin1992}.
\begin{theorem}[\cite{Yavin1992}]\label{past_result}
    For a normal toric variety $X$, let $\mathbb{T}$ denote the affine torus, and $\mathbb{T}\to X$ denote the maximal torus orbit. Suppose $\mathcal{L}$ is a rank one nontrivial local system (nontrivial means not isomorphic to the constant local system) on $\mathbb{T}$, then the intersection cohomology group with coefficient in $\mathcal{L}$ vanishes, for any perversity $\bar{p}$:
    \begin{equation*}
        IH^*_{\bar{p}}(X,\mathcal{L})=0.
    \end{equation*}
\end{theorem}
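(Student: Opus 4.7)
The proof rests on combining two standard facts. First, for $\mathbb{T}=(\mathbb{C}^*)^n$ a rank-one local system $\mathcal{L}$ corresponds to a character $\chi:\mathbb{Z}^n\to\mathbb{C}^*$, and by the Künneth formula together with the $\mathbb{C}^*$-case (where $H^*(\mathbb{C}^*,\mathcal{L}_\lambda)$ is the cohomology of $\mathbb{C}\xrightarrow{1-\lambda}\mathbb{C}$) one obtains $H^*(\mathbb{T},\mathcal{L})=0$ whenever $\chi$ is nontrivial. Second, because $\mathcal{L}$ corresponds to a character, the multiplication map $\mu:\mathbb{T}\times\mathbb{T}\to\mathbb{T}$ satisfies $\mu^*\mathcal{L}\simeq\mathcal{L}\boxtimes\mathcal{L}$. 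The strategy is to couple these facts through the $\mathbb{T}$-action on $X$ so as to compute one and the same hypercohomology group in two incompatible ways.

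First I would set up the action morphism $a:\mathbb{T}\times X\to X$, $(t,x)\mapsto tx$, and observe that $a=\mathrm{pr}_2\circ\phi$, where $\phi(t,x)=(t,tx)$ is a stratified automorphism of $\mathbb{T}\times X$ (since the $\mathbb{T}$-action preserves the orbit stratification). In particular $a$ is smooth of relative dimension $n=\dim\mathbb{T}$, and $a^{-1}(\mathbb{T})=\mathbb{T}\times\mathbb{T}$, on which $a^*\mathcal{L}=\mu^*\mathcal{L}\simeq\mathcal{L}\boxtimes\mathcal{L}$. Applying the smooth-pullback compatibility for $\IC$ together with the Künneth formula for $\IC$ on a product with a smooth factor, I would establish the key identification
\begin{equation*}
a^*\IC_{\bar p}(X,\mathcal{L})\;\simeq\;\mathcal{L}\boxtimes\IC_{\bar p}(X,\mathcal{L}).
\end{equation*}

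With this in hand, I would compute $R\Gamma(\mathbb{T}\times X,a^*\IC_{\bar p}(X,\mathcal{L}))$ in two ways. Using the identification above together with Künneth, it equals $R\Gamma(\mathbb{T},\mathcal{L})\otimes IH^*_{\bar p}(X,\mathcal{L})$, which vanishes by the first ingredient. Using instead the factorisation $a=\mathrm{pr}_2\circ\phi$ and the fact that $\phi$ is a homeomorphism, it equals $R\Gamma(\mathbb{T}\times X,\mathrm{pr}_2^*\IC_{\bar p}(X,\mathcal{L}))=R\Gamma(\mathbb{T},\mathbb{C})\otimes IH^*_{\bar p}(X,\mathcal{L})$. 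Equating the two and noting that $R\Gamma(\mathbb{T},\mathbb{C})=\Lambda^*\mathbb{C}^n$ is nonzero forces $IH^*_{\bar p}(X,\mathcal{L})=0$.

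The main technical obstacle is justifying the identification $a^*\IC_{\bar p}\simeq\mathcal{L}\boxtimes\IC_{\bar p}$ for an arbitrary perversity $\bar p$. For the middle perversity this is essentially formal from the characterisation of $\IC$ by support and cosupport conditions and the smoothness of $a$, together with the standard Künneth for $\IC$ on a product with a smooth factor. For a general perversity one must pin down how $\bar p$ extends to the product stratification of $\mathbb{T}\times X$, and verify that the relevant support conditions are preserved under smooth pullback and under box product with a local system on a smooth factor. Once this compatibility is in place, the remainder of the argument is a purely formal two-way computation of the same hypercohomology group.
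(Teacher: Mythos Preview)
Your argument is correct and takes a genuinely different route from the paper. The obstacle you flag for general $\bar p$ dissolves once you run both sides through Deligne's construction: since $a=\mathrm{pr}_2\circ\phi$ is smooth, $a^*$ is exact and hence commutes with each $\tau_{\le k}$, and by smooth base change it commutes with each open pushforward $Rj_{k*}$; likewise $\mathcal L\boxtimes(-)$, being tensor with a rank-one local system in degree~$0$ on the first factor, commutes with both. Applying this to $\IC_{\bar p}(X,\mathcal L)=\tau_{\le\bar p(n)-n}Rj_{n*}\cdots Rj_{2*}\mathcal L[n]$ identifies both $a^*\IC_{\bar p}(X,\mathcal L)$ and $\mathcal L\boxtimes\IC_{\bar p}(X,\mathcal L)$ with the same iterated truncation--pushforward of $\mathcal L\boxtimes\mathcal L$ on $\mathbb T\times X$, so no separate perversity bookkeeping on the product is needed.

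The paper proceeds entirely differently. It deduces the toric statement as the case $B=\mathbb T$ of its main theorem via a local, stratum-by-stratum analysis: it calls a complex \emph{twisted} if its cohomology sheaves pull back along every orbit map to twisted local systems on the compact torus $T\subset\mathbb T$, proves (using equivariant tubular neighbourhoods and a case split on whether the nontrivial monodromy lies ``around'' or ``along'' the orbit) that $Rj_*$ of a twisted local system is again twisted, concludes via Deligne's construction that $\IC_{\bar p}(\mathcal L)$ is twisted, and finally kills hypercohomology by the spectral sequence associated to the orbit filtration. Your global equivariance-plus-K\"unneth trick is far shorter and avoids all local geometry; what the paper's approach buys is structural information about $\IC_{\bar p}(\mathcal L)$ itself (its restriction to every orbit is twisted) and a framework designed to handle an arbitrary connected solvable $B$. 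That said, your method is not really tied to the open orbit being a group: for $a:B\times X\to X$ one still has $a^*\mathcal L\simeq p^*\mathcal L\boxtimes\mathcal L$ on $B\times U$ for rank-one $\mathcal L$, and $H^*(B,p^*\mathcal L)=0$ precisely when $\mathcal L$ is twisted in the paper's sense, so the same two-way computation would also recover Theorem~\ref{main_theorem}.
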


We generalize this result to the case $X$ equipped with a connected linear solvable group $B$ action with finitely many orbits. Throughout this paper $B$ will always refer to a connected linear solvable group. The maximal $B$-orbit will be denoted by $U$, which is a dense open subset of $X$. By a theorem of Rosenlicht \cite{Rosenlicht1963} (see also \cite{Brion2021}), $U$ is isomorphic to a product of an affine torus and an affine space. Unfortunately, we are not able to prove the similar vanishing result for every nontrivial rank one local system on $U$, but almost every one. Fix a point $u\in U$, and let $p:B\to U$ denote the orbit map $p(b)=b\cdot u$. Call a rank one local system $\mathcal{L}$ \emph{twisted} if the pullback of $\mathcal{L}$ along the map $p$ is a nontrivial local system on $B$. Notice that this notion is independent of the choice of the point $u$. Our main result is the following.
\begin{theorem}\label{main_theorem}
    Let $X$ be a normal variety such that a linear connected solvable group $B$ acts on $X$ with finitely many orbits. Let $\mathcal{L}$ be a twisted rank one local system on the maximal $B$ orbit. Then for any perversity $\bar{p}$, the intersection cohomology group with coefficient in $\mathcal{L}$ vanishes:
    \begin{equation*}
        IH^*_{\bar{p}}(X,\mathcal{L})=0.
    \end{equation*}
\end{theorem}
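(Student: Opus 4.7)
The plan is to argue by induction on the number $n$ of $B$-orbits of $X$, which is finite by hypothesis.

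For the base case $n = 1$, we have $X = U$, which is smooth, so $IC_{\bar{p}}(U, \mathcal{L}) = \mathcal{L}[\dim U]$ and $IH^*_{\bar{p}}(U, \mathcal{L}) = H^*(U, \mathcal{L})$. By Rosenlicht's theorem $U \cong \mathbb{T}^k \times \mathbb{A}^m$, and since $\mathbb{A}^m$ is simply connected the Künneth formula forces every rank one local system on $U$ to have the form $\mathcal{L}' \boxtimes \mathbb{C}$ for a rank one local system $\mathcal{L}'$ on $\mathbb{T}^k$. If $\mathcal{L}'$ were trivial then $\mathcal{L}$ itself would be trivial, whence $p^*\mathcal{L}$ would be trivial on $B$, contradicting the twisted hypothesis; hence $\mathcal{L}'$ is nontrivial. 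The elementary vanishing $H^*(\mathbb{G}_m, \mathcal{L}_\chi) = 0$ for a nontrivial character $\chi$, combined with Künneth, then yields $H^*(U, \mathcal{L}) = 0$.

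For the inductive step with $n \geq 2$, the finiteness of the orbit-closure order supplies a closed $B$-orbit $Y \subset X$. Set $V := X \setminus Y$: this is open, $B$-stable, has $n - 1$ orbits, and retains $U$ as the maximal orbit with the same twisted $\mathcal{L}$. Write $i : Y \hookrightarrow X$ and $j : V \hookrightarrow X$. Since intermediate extensions restrict compatibly on open subsets, $j^* IC_{\bar{p}}(X, \mathcal{L}) \cong IC_{\bar{p}}(V, \mathcal{L})$, so the recollement triangle
\begin{equation*}
    i_* i^! IC_{\bar{p}}(X, \mathcal{L}) \to IC_{\bar{p}}(X, \mathcal{L}) \to Rj_* IC_{\bar{p}}(V, \mathcal{L}) \xrightarrow{+1}
\end{equation*}
induces the long exact sequence
\begin{equation*}
    \cdots \to \mathbb{H}^k(Y, i^! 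IC_{\bar{p}}(X, \mathcal{L})) \to IH^k_{\bar{p}}(X, \mathcal{L}) \to IH^k_{\bar{p}}(V, \mathcal{L}) \to \cdots .
\end{equation*}
By the inductive hypothesis applied to $V$, $IH^*_{\bar{p}}(V, \mathcal{L}) = 0$, which reduces the theorem to proving $\mathbb{H}^*(Y, i^! IC_{\bar{p}}(X, \mathcal{L})) = 0$.

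The main obstacle is this last vanishing, a costalk-type calculation along a closed $B$-orbit. Since $Y$ is a single $B$-orbit and $\mathcal{L}$ is $B$-equivariant up to a character twist, the cohomology sheaves of $i^! IC_{\bar{p}}(X, \mathcal{L})$ should be locally constant on $Y$, with stalks determined by a transverse direction. My plan is to invoke an equivariant slice theorem in the solvable setting to write a neighborhood of $Y$ as $B \times^{B_y} N_y$ for a $B_y$-stable slice $N_y$ transverse to $Y$ at a point $y$, and then use a contracting one-parameter subgroup of $B_y$ retracting $N_y$ onto $y$; this retraction should express the desired hypercohomology in terms of intersection cohomology of $N_y$ with a rank one local system inherited from $\mathcal{L}$. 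Closing the argument then requires verifying that the inherited local system is twisted with respect to the $B_y$-action on its maximal orbit, so that the induction closes either on the dimension of $X$ or on the number of orbits of the slice. Making the equivariant slice theorem precise in the solvable category and tracking the twisted condition through the slice reduction are the principal technical hurdles.
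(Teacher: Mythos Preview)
Your reduction via the recollement triangle is sound and parallels the paper's spectral-sequence argument (Proposition~\ref{spectral_argument}), which peels off all strata at once rather than one closed orbit at a time; your base case is essentially the content of Lemma~\ref{auxillary_result}. The genuine gap is exactly where you flag it: the ``equivariant slice theorem in the solvable setting.'' Luna-type \'etale slices exist for reductive group actions, not solvable ones, and there is no general reason a closed $B$-orbit admits a $B_y$-stable transversal with a contracting $\mathbb{G}_m$; without this, the costalk computation $\mathbb{H}^*(Y, i^! IC_{\bar{p}}(X,\mathcal{L}))=0$ does not go through. Tracking the twisted condition to the slice is a second, independent difficulty: twistedness is defined via the orbit map $B\to U$, and it is not automatic that the restriction of $\mathcal{L}$ to the open orbit of $N_y$ is twisted for $B_y$.

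The paper resolves both issues by a different move: it replaces $B$ with the maximal compact torus $T$ of a suitably chosen maximal torus $\mathbb{T}\subset B$. For compact group actions one has honest equivariant tubular neighbourhoods (Bredon), which play the role of your slice. The local computation then splits $T\cong T'\times T_p^\circ$ and handles two cases according to whether the nontrivial monodromy lives in the stabilizer direction (forcing the stalk to vanish by the $H^*(T,\mathcal{L})=0$ lemma) or in the orbit direction (forcing the pushforward to remain twisted via a K\"unneth/base-change argument). The passage between $B$-twistedness and $T$-twistedness is handled separately (Lemma~\ref{auxillary_result} and the lemma after Theorem~\ref{Main_tech_result}). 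If you want to salvage your outline, the missing ingredient is precisely this switch to a compact torus; an algebraic slice for $B$ is unlikely to materialize.
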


If the map $p:B\to U$ has connected fibers, i.e. the stabilizer of a point in $U$ is connected, then a rank one local system on $U$ is twisted if and only if it is nontrivial, in particular \cref{main_theorem} is a generalization of \cref{past_result}. The class of normal varieties with finitely many solvable group orbits includes all spherical varieties, and the closure of any solvable group orbit on them, as long as they are normal.

Here is an outline of the proof. We will first define the notion of a ``twisted complex'', in an appropriate generality. We will prove that the property of being twisted is preserved under shifting and truncation (\cref{trivial_property}). The main result is that in our settings, a derived pushforward of a twisted complex is again twisted (\cref{reduction_to_derived_pushforward}). Put all these together, we can prove any intersection complex of any perversity is twisted if it is a twisted local system on an open subset. Using a spectral sequence, we can prove such intersection complexes have vanishing cohomology groups. To prove our main result, we reduce it to proving a derived pushforward of a twisted local system is again twisted (\cref{Main_tech_result}). This is proved by a careful analysis of the local structure of a $B$-variety, and we reduce our proof to a local computation.

This proof is new even in the case $X$ is a normal toric variety. The new idea is using compact torus rather than the affine torus. The use of compact torus enables us to construct equivariant tubular neighborhood easily. Our result can give a general formula for computing the decomposition theorem of a proper toric map between two toric varieties. This general version may also be used to compute the decomposition theorem of an equivariant map between two spherical varieties.

\subsubsection*{Acknowledgements.} The author would like to thank his advisor Botong Wang for his encouragement and various helpful discussions.

\section{Twisted local system}
We first describe a formulation that is useful in the proof of our main theorem.

A local system $\mathcal{L}$ on a compact torus $T=(S^1)^n$ are given by a representation of $\pi_1(T)=\mathbb{Z}^n$. By the Jordan-H\"older theorem, the local system $\mathcal{L}$ has a composition series
\begin{equation*}
    0\subset \mathcal{L}_1\subset \mathcal{L}_2\subset \cdots\subset \mathcal{L}_m=\mathcal{L}
\end{equation*}
such that each composition factor $\mathcal{L}_i/\mathcal{L}_{i-1}$ is a simple local system. On a compact torus $T=(S^1)^n$, any simple local system is of rank one, because $\pi_1(T)=\mathbb{Z}^n$ is abelian. A local system $\mathcal{L}$ on $T$ is called \emph{twisted} if all its composition factors $\mathcal{L}_i/\mathcal{L}_{i-1}$ are not isomorphic to the trivial local system $\mathbb{Q}_T$. The vanishing result we described in the introduction comes from the following standard result (see {\cite[Lemma 1.6]{Esnault1986}} for example).

\begin{lemma}\label{simple_vanishing_result}
    Let $T=(S^1)^n$ be a compact torus, $\mathcal{L}$ a twisted local system on $T$. Then the cohomology group $H^i(T,\mathcal{L})=0$ for all $i$.
\end{lemma}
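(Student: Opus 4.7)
The plan is to reduce to the case where $\mathcal{L}$ itself is a simple (hence rank one) nontrivial local system, then carry out a direct computation via K\"unneth.

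First, I would induct on the length $m$ of the composition series $0\subset \mathcal{L}_1\subset\cdots\subset\mathcal{L}_m=\mathcal{L}$. For $m=1$ the local system is simple, which we treat below. For $m>1$, observe that the subobject $\mathcal{L}_{m-1}$ inherits a composition series whose factors are among the factors of $\mathcal{L}$, so it is again twisted, and the quotient $\mathcal{L}/\mathcal{L}_{m-1}$ is a simple twisted local system. The short exact sequence
\begin{equation*}
    0\to \mathcal{L}_{m-1}\to \mathcal{L}\to \mathcal{L}/\mathcal{L}_{m-1}\to 0
\end{equation*}
yields a long exact sequence in cohomology, and by induction both outer terms have vanishing cohomology, hence so does $H^*(T,\mathcal{L})$.

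For the base case, let $\mathcal{L}$ be a rank one local system on $T=(S^1)^n$ not isomorphic to $\mathbb{Q}_T$. Such a local system corresponds to a character $\chi\colon \pi_1(T)=\mathbb{Z}^n\to \mathbb{Q}^\times$, and restricting $\chi$ to each coordinate factor produces characters $\chi_i\colon \mathbb{Z}\to \mathbb{Q}^\times$ with $\mathcal{L}\cong \mathcal{L}_{\chi_1}\boxtimes\cdots\boxtimes \mathcal{L}_{\chi_n}$. Since $\chi$ is nontrivial, at least one $\chi_i$ is nontrivial. A direct computation using the standard CW structure on $S^1$ (equivalently, the group cohomology of $\mathbb{Z}$ with coefficients in the one dimensional representation given by $\chi_i$) shows that $H^0(S^1,\mathcal{L}_{\chi_i})$ is the invariants and $H^1(S^1,\mathcal{L}_{\chi_i})$ is the coinvariants of multiplication by $\chi_i(1)$ on $\mathbb{Q}$; both are zero because $1-\chi_i(1)$ is invertible on $\mathbb{Q}$. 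By the K\"unneth formula
\begin{equation*}
    H^*(T,\mathcal{L})\cong \bigotimes_{i=1}^n H^*(S^1,\mathcal{L}_{\chi_i}),
\end{equation*}
and one vanishing factor kills the entire product.

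There is no serious obstacle here; the only point that requires mild care is ensuring that the subobjects and quotients arising in the induction step are themselves twisted, which follows immediately from the definition and the Jordan--H\"older theorem. The computation for the circle and the K\"unneth decomposition are standard, which is why the result can be cited directly from \cite{Esnault1986}.
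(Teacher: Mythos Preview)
Your argument is correct and matches the paper's approach: the reduction to the rank one case via the composition series (your induction with the long exact sequence is the same as the paper's spectral sequence), followed by the standard vanishing for a nontrivial rank one local system, which the paper simply cites while you spell out via K\"unneth and the circle computation.
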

\begin{proof}
    By the spectral sequence induced by the composition series, the proof is reduced to the case $\mathcal{L}$ is a nontrivial rank one local system, which is a standard result.
\end{proof}

Let us define the following notions. Suppose $X$ is a pseudomanifold equipped with a $T$-action. A stratification
\begin{equation*}
    \mathfrak{X}:X=X_n\supset X_{n-2}\supset\cdots\supset X_0\supset X_{-1}=\emptyset
\end{equation*}
is called \emph{$T$-stable} if every $X_i$ is $T$-stable, i.e. $T\cdot X_i\subset X_i$. We let $D^b_c(X)$ denote the bounded complexes of constructible sheaves on $X$ with coefficient in $\mathbb{C}$, and $D^b_{c,T}(X)$ denote the constructible sheaves with respect to some $T$-stable stratification. For a point $p\in X$, we let $\theta_p:T\to X$ denote the orbit map defined by $p$, $\theta_p(t)=t\cdot p$. We define
\begin{equation*}
    \mathcal{T}(X)\coloneqq\{\mathcal{F}^\cdot\in D^b_{c,T}(X): \forall p\in X, \theta_p^*(\mathcal{H}^i(\mathcal{F}^\cdot)) \text{ is a twisted local system} \}.
\end{equation*}
We call an element of $D^b_{c,T}(X)$ a $T$-constructible complex and an element of $\mathcal{T}(X)$ a twisted $T$-constructible complex. 
In particular, a local system is twisted if its pullback along any orbit map $\theta_p$ is a twisted local system on $T$. However, it is equivalent to require its pullback along \emph{some} orbit map is twisted, since for any two points $p,q$, and any path $\alpha:[0,1]\to X$ connecting $p$ and $q$, the map
\begin{equation*}
    H:T\times[0,1]\to X,\: H(t,x)=t\cdot \alpha(x)
\end{equation*}
gives a homotopy between $\theta_p$ and $\theta_q$. Therefore, the pullback local system does not depend on the choice of the point $p$.
\begin{remark}
    The condition that $\theta_p^*\mathcal{L}$ is twisted is slightly stronger than just requiring the restriction of $\mathcal{L}$ to the orbit of $p$ is twisted, because the stabilizer $T_p$ may be disconnected, in which case $\theta_p$ would be a composition of a finite covering map and a projection. The pullback of a twisted local system along a finite covering map may not be twisted.
\end{remark}

Let me first list a few properties of the collection $\mathcal{T}(X)$:
\begin{lemma}\label{trivial_property}
    For any pseudomanifold $X$ equipped with a $T$-action and a twisted constructible complex $\mathcal{F}^\cdot\in \mathcal{T}(X)$, we have
\begin{enumerate}
    \item Any truncation $\tau_{\leq n}\mathcal{F}^\cdot$ is in $\mathcal{T}(X)$.
    \item Any shift $\mathcal{F}^\cdot[m]$ is in $\mathcal{T}(X)$.
    \item (Two-out-of-three) For a distinguished triangle
    \begin{equation*}
        \mathcal{F'}^\cdot\to \mathcal{F}^\cdot\to \mathcal{F''}^\cdot\to\mathcal{F'}^\cdot[1]
    \end{equation*}
    in $D^b_{c,T}(X)$, if two out of three are in $\mathcal{T}(X)$, so is the third one.
\end{enumerate}    
\end{lemma}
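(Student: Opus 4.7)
The plan is to reduce each of the three properties to a statement about local systems on the compact torus $T$ itself. The key preliminary observation I would record is that the twisted local systems on $T$ form a full subcategory of all local systems on $T$ that is closed under subobjects, quotients, and extensions. This holds because a local system on $T$ corresponds to a finite-dimensional $\mathbb{C}$-representation $V$ of $\pi_1(T)=\mathbb{Z}^n$, which decomposes canonically into generalized character spaces $V=\bigoplus_\chi V_\chi$; being twisted is precisely the condition $V_{\chi_0}=0$ for the trivial character $\chi_0$, and since this decomposition is functorial and exact the condition is preserved under the three operations. In particular, for any short exact sequence $0 \to A \to B \to C \to 0$ of local systems on $T$, if any two terms are twisted, so is the third.

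The second input I would use throughout is that the inverse-image functor $\theta_p^{*}$ is exact on sheaves for each $p \in X$, so $\theta_p^{*}\mathcal{H}^i(\mathcal{F}^\cdot) = \mathcal{H}^i(\theta_p^{*}\mathcal{F}^\cdot)$. Constructibility with respect to a common $T$-stable refinement of the relevant stratifications is routine in each of the three cases (for (3), one takes a refinement of the stratifications of the two known terms), so I only have to verify the pointwise twistedness condition on cohomology sheaves.

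For (1) and (2) the verification is immediate: the cohomology sheaves of $\mathcal{F}^\cdot[m]$ are just a reindexing of those of $\mathcal{F}^\cdot$, while the cohomology sheaves of $\tau_{\leq n}\mathcal{F}^\cdot$ are either $\mathcal{H}^i(\mathcal{F}^\cdot)$ or the zero sheaf. Their pullbacks via $\theta_p$ are twisted by assumption, or else are the zero local system, which is vacuously twisted.

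For (3), I would apply $\theta_p^{*}$ to the distinguished triangle and take the long exact sequence of cohomology sheaves. Because $\theta_p^{*}$ is exact, this is an honest long exact sequence of local systems on $T$ (kernels and images of morphisms of local systems on the connected space $T$ are again local systems). Breaking the long exact sequence into short exact sequences via images and kernels and iterating the two-out-of-three property from the first paragraph yields the conclusion in every degree. I do not anticipate any real obstacle here: the only substantive content is the abelian-subcategory property of twisted local systems on $T$, and the rest of the argument is formal bookkeeping.
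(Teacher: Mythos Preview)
Your proposal is correct and follows essentially the same route as the paper: both treat (1) and (2) as trivial, and for (3) both pull back the long exact sequence of cohomology sheaves along an orbit map $\theta_p$, then break it into short exact sequences and invoke that twisted $\pi_1(T)$-representations are closed under subobjects, quotients, and extensions. The only cosmetic difference is that you justify this closure via the generalized character decomposition $V=\bigoplus_\chi V_\chi$, whereas the paper phrases it in terms of composition series; these are equivalent.
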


\begin{proof}
    The first two properties are trivial. Let us prove the third one. The distinguished triangle induces a long exact sequence
    \begin{equation*}
        \cdots\to\mathcal{H}^i(\mathcal{F'}^\cdot)\to \mathcal{H}^i(\mathcal{F}^\cdot)\to \mathcal{H}^i(\mathcal{F''}^\cdot)\to\cdots
    \end{equation*}
    Without loss of generality, we assume $\mathcal{F'}^\cdot$ and $\mathcal{F''}^\cdot$ are in $\mathcal{T}(X)$. Then we pullback the long exact sequence along any orbit map $\theta_p$:
    \begin{equation*}
        \cdots\to\theta_p^*(\mathcal{H}^i(\mathcal{F'}^\cdot))\to\theta_p^* (\mathcal{H}^i(\mathcal{F}^\cdot))\to\theta_p^* (\mathcal{H}^i(\mathcal{F''}^\cdot))\to\cdots
    \end{equation*}
    which corresponds to a long exact sequence of $\pi_1(T)$-representations:
    \begin{equation*}
        \cdots\to M'^i\xrightarrow{p} M^i\xrightarrow{q}  M''^i\to\cdots,
    \end{equation*}
    Therefore
    \begin{equation*}
        0\to \mathrm{Im}\:p\to M^i\to \mathrm{Ker}\:q\to 0
    \end{equation*}
    is a short exact sequence. Since $\mathrm{Im}\:p$ and $\mathrm{Ker}\:q$ are quotient and subrepresentation of a twisted representation respectively, they are also twisted. We conclude that $M^i$ is also twisted and $\mathcal{F}^\cdot\in \mathcal{T}(X)$.
\end{proof}

As mentioned in the introduction, our main result is showing the collection of twisted complexes is stable under a derived open pushforward. The next proposition will reduce proving our main result to proving a derived open pushforward of a twisted local system is a twisted constructible complex.

\begin{proposition} \label{reduction_to_derived_pushforward}
    For a $T$-pseudomanifold $X$ equipped with a $T$-stable stratification $\mathfrak{X}$, suppose every stratum $X_\alpha$ has abelian fundamental group, and for any twisted rank one local system $\mathcal{L}$ on $X_\alpha$, $R(j_\alpha)_* \mathcal{L}$ is a twisted $T$-constructible complex, where $j_\alpha:X_\alpha\to X$ is the inclusion map. Then for any $\mathfrak{X}$-constructible open subset $j:U\to X$ of $X$, and any twisted $\mathfrak{X}$-constructible complex $\mathcal{F}^\cdot$ on $U$, the derived pushforward $Rj_*\mathcal{F}^\cdot$ is twisted.
\end{proposition}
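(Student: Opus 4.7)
The plan is to argue by nested induction: an outer induction on $|X|$, the number of strata of the ambient space, and an inner induction on $|U|$, the number of strata of $U$. A length reduction via the truncation triangle $\tau_{\leq n}\mathcal{F}^\cdot \to \mathcal{F}^\cdot \to \tau_{>n}\mathcal{F}^\cdot$ and \cref{trivial_property} lets me assume $\mathcal{F}^\cdot = \mathcal{F}[0]$ for a single twisted constructible sheaf $\mathcal{F}$. Before starting the main induction I first extend the hypothesis to arbitrary twisted complexes on a single stratum: another length reduction reduces to a twisted local system, and then the Jordan--H\"older composition series (whose rank-one subquotients remain twisted, since $\pi_1(X_\alpha)$ is abelian) combined with two-out-of-three yields that $R(j_\alpha)_* \mathcal{L}^\cdot$ is twisted for any twisted complex $\mathcal{L}^\cdot$ on $X_\alpha$.

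The base cases ($|X| \leq 1$, $U = X$, or $|U| = 1$) are immediate or follow from the extended hypothesis. For the inductive step with $U \subsetneq X$ and $|U| \geq 2$, pick a stratum $X_\alpha$ closed in $U$ (which exists by finiteness), set $V = U \setminus X_\alpha$ (open in $X$ because $X_\alpha$ is closed in $U$), and write $k\colon V \hookrightarrow U$, $i\colon X_\alpha \hookrightarrow U$, $j_V = j \circ k$, $j_\alpha = j \circ i$. Applying $Rj_*$ to the open--closed triangle $k_! k^* \mathcal{F} \to \mathcal{F} \to i_* i^* \mathcal{F}$ gives
\begin{equation*}
Rj_* k_! k^* \mathcal{F} \to Rj_* \mathcal{F} \to R(j_\alpha)_* i^* \mathcal{F} \to,
\end{equation*}
whose right term is twisted by the extended hypothesis, so by two-out-of-three it suffices to show $Rj_* k_! k^* \mathcal{F}$ is twisted. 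Applying $Rj_*$ to the comparison triangle $k_! k^* \mathcal{F} \to Rk_* k^* \mathcal{F} \to i_* i^* Rk_* k^* \mathcal{F}$ yields
\begin{equation*}
Rj_* k_! k^* \mathcal{F} \to R(j_V)_* k^* \mathcal{F} \to R(j_\alpha)_* \bigl(i^* Rk_* k^* \mathcal{F}\bigr) \to,
\end{equation*}
whose middle term is twisted by the inner induction, since $|V| = |U| - 1$.

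The hard part will be the right term, namely showing that $i^* Rk_* k^* \mathcal{F}$ is a twisted complex on $X_\alpha$. My plan is to invoke the outer induction, applying the proposition with $U$ as the new ambient space, $V \subset U$ as the open subset, and $k^* \mathcal{F}$ as the twisted complex; this is legal because $|U| < |X|$ and the hypotheses transfer to $U$. Indeed, the abelian $\pi_1$ condition is inherited stratum-wise, and for any stratum $X_\beta \subset U$ the open base change gives $R(k_\beta)_* \mathcal{L} = j^* R(j_\beta)_* \mathcal{L}$, which is twisted because restriction to a $T$-stable open preserves twistedness. The outer induction therefore produces $Rk_* k^* \mathcal{F}$ twisted on $U$; pulling back along $i$ preserves twistedness (stalks along $T$-orbits in $X_\alpha$ are unchanged), and a final application of the extended hypothesis makes the right term twisted, completing the argument by two-out-of-three.
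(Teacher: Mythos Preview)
Your argument is correct and follows a genuinely different route from the paper's. After the same preliminary reductions (truncation to a sheaf, extension of the hypothesis to arbitrary twisted local systems on a stratum), the paper inducts on $\dim X$ rather than on the number of strata. It lets $V\subset U$ be the maximal open subset on which $\mathcal{F}$ is locally constant (so each component of $V$ is a top-dimensional stratum) with closed complement $F=U\setminus V$, and uses the \emph{other} attaching triangle
\[
(i_F)_*(i_F)^!\mathcal{F}\ \to\ \mathcal{F}\ \to\ R(j_V)_*(j_V)^*\mathcal{F}\ \to\ (i_F)_*(i_F)^!\mathcal{F}[1].
\]
Here $R(j_V)_*(j_V)^*\mathcal{F}$ and its $Rj_*$-image are twisted directly from the hypothesis (pushforward of a twisted local system from strata), so two-out-of-three makes $(i_F)_*(i_F)^!\mathcal{F}$ twisted on $U$; since it is supported on the lower-dimensional $F$, a base-change square and the dimension induction finish. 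Thus the paper needs only a single induction and a single triangle, and uses $i^!$ rather than $k_!$. Your approach instead peels off a \emph{closed} stratum of $U$, which forces both the auxiliary comparison triangle $k_!k^*\mathcal{F}\to Rk_*k^*\mathcal{F}\to i_*i^*Rk_*k^*\mathcal{F}$ and the nested induction on $(|X|,|U|)$; in exchange you avoid $i^!$ altogether and make explicit, via open base change, why the hypotheses descend to the smaller ambient space $U$. One minor cost of your scheme is that it needs $\mathfrak{X}$ to have finitely many strata (so that $|X|$ and $|U|$ are well-defined and a closed stratum of $U$ exists), whereas the paper's dimension induction only needs $\dim X<\infty$; in the intended application both hold.
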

\begin{proof}
    First notice that $R(j_\alpha)_* \mathcal{L}$ is a twisted $T$-constructible complex for a twisted $\mathcal{L}$ of \emph{any} rank. This is proved by inducting on the length of $\mathcal{L}$ and applying the two-out-of-three property. The base case that $\mathcal{L}$ is simple is exactly our assumption since $X_\alpha$ has abelian fundamental group which implies a simple local system must be of rank one. 

    We want to show that $Rj_*\mathcal{F}^\cdot$ is twisted. By doing truncation and two-out-of-three property, we can assume $\mathcal{F}^\cdot$ is a constructible sheaf (concentrated at degree 0).

    We do induction on $\dim X$. The base case is when $\dim X=0$, which means $X$ is a set of points. But there are no twisted local systems on a point. Therefore, the proposition is trivial in this case. Next we prove the general case. If $\mathcal{F}$ is supported on a lower dimensional closed subspace $Z\cap U\subset U$, where $Z\subset X$ is a closed subset, then this case is proved by the following diagram, and the proper base change theorem.
    \begin{center}
        \begin{tikzcd}
            U\cap Z \arrow[r, "i_U"] \arrow[d, "j_Z"] & U \arrow[d, "j"] \\
            Z \arrow[r, "i"]                          & X               
        \end{tikzcd}    
    \end{center}

    $$Rj_*\mathcal{F}^\cdot=Rj_*(i_U)_*i_U^*\mathcal{F}^\cdot=i_*R(j_Z)_*i_U^*\mathcal{F}^\cdot.$$ Since $Z$ satisfy all the assumptions on $X$, we are done by applying the inductive hypothesis to $U\cap Z\to Z$.
    
    If $\mathcal{F}$ is not supported on a lower dimensional subspace, then it is generically a local system.
    Let $V$ be the maximal open subset of $U$ that $\mathcal{F}$ restricts to a local system on each connected components of $V$ (notice that each component of $V$ is a stratum of $X$), and let $F=U\backslash V$. Then $\dim F<\dim X$. Consider the attaching triangle 
    \begin{equation*}
        (i_F)_*(i_F)^!\mathcal{F}^\cdot\to \mathcal{F}^\cdot\to R(j_V)_*(j_V)^*\mathcal{F}^\cdot\to (i_F)_*(i_F)^!\mathcal{F}^\cdot[1],
    \end{equation*}
    where $i_F:F\to U$ and $j_V:V\to U$. We apply $Rj_*$ on this triangle,
    \begin{equation*}
        Rj_*(i_F)_*(i_F)^!\mathcal{F}^\cdot\to Rj_*\mathcal{F}^\cdot\to Rj_*R(j_V)_*(j_V)^*\mathcal{F}^\cdot\to Rj_*(i_F)_*(i_F)^!\mathcal{F}^\cdot[1].
    \end{equation*}

    The complex $R(j_V)_*j_V^*\mathcal{F}^\cdot$ and $R(j\circ j_V)_*j_V^*\mathcal{F}^\cdot$ are twisted by out assumption because they are the pushforward of a twisted local system on strata. We conclude by two-out-of-three property that $(i_F)_*(i_F)^!\mathcal{F}^\cdot$ is twisted, and so is $Rj_*(i_F)_*(i_F)^!\mathcal{F}^\cdot$ by the inductive hypothesis. We conclude again by the two-out-of-three property that $Rj_*\mathcal{F}^\cdot$ is twisted.
\end{proof}

In the rest of the section, we will specialize to the case that $X$ is an algebraic variety equipped with a $B$-action with finitely many orbits on $X$, and the stratification $\mathfrak{X}$ is given by the $B$-orbits. The facts we need are summarized in the following lemma.

\begin{lemma}\label{auxillary_result}
    Let $\mathcal{O}\cong B/H$ be an orbit of $B$ and $\pi:B\to \mathcal{O}$ given by $\pi(b)= [bH]$. Fix a maximal torus $\mathbb{T}\subset B$, let $T\subset \mathbb{T}$ be the compact subtorus of $\mathbb{T}$. A rank one local system $\mathcal{L}$ on $\mathcal{O}$ is twisted under the $T$-action if and only if $\pi^*\mathcal{L}$ is nontrivial. The cohomology group $H^*(\mathcal{O},\mathcal{L})$ vanishes if $\mathcal{L}$ is twisted.
\end{lemma}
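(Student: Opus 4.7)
The plan for both parts is to reduce to \cref{simple_vanishing_result} by exploiting that compact tori control the relevant homotopy types.

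For the first claim, I would establish that the inclusion $i: T \hookrightarrow B$ is a deformation retract. By the Levi decomposition for connected solvable linear algebraic groups, $B \cong \mathbb{T} \ltimes R_u(B)$ as a complex variety, with $R_u(B)$ isomorphic to an affine space (hence contractible) and $\mathbb{T} \cong (\mathbb{C}^*)^n$ deformation retracting onto $T = (S^1)^n$. So $i_*: \pi_1(T) \xrightarrow{\sim} \pi_1(B)$. The orbit map $\theta_p$ at $p = [eH]$ equals $\pi \circ i$, hence the characters of $\pi_1(T)$ and $\pi_1(B)$ corresponding to $\theta_p^*\mathcal{L}$ and $\pi^*\mathcal{L}$ agree via $i_*$; in particular one is trivial iff the other is. For rank one, ``nontrivial'' coincides with ``twisted,'' and the independence of the basepoint $p$ was noted earlier in the paper.

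For the vanishing, first reduce to $H$ connected: the quotient $\widetilde{\mathcal{O}} := B/H^0 \to \mathcal{O}$ is a finite étale cover with Galois group $\Gamma = H/H^0$, and standard finite-group descent with $\mathbb{C}$-coefficients gives $H^*(\mathcal{O}, \mathcal{L}) = H^*(\widetilde{\mathcal{O}}, \widetilde{\mathcal{L}})^\Gamma$ where $\widetilde{\mathcal{L}}$ is the pullback. Since the further pullback of $\widetilde{\mathcal{L}}$ along $B \to \widetilde{\mathcal{O}}$ is the nontrivial $\pi^*\mathcal{L}$, the local system $\widetilde{\mathcal{L}}$ is itself twisted, so we may assume $H$ is connected.

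Assuming $H$ connected, after inner conjugation one arranges $\mathbb{T}_H \subset \mathbb{T}$, where $\mathbb{T}_H$ is the maximal torus of $H$. Writing $\mathbb{T} = T \times A$ for the compact/split decomposition yields $\mathbb{T}_H = T_H \times A_H$ with $T_H = T \cap \mathbb{T}_H = T \cap H$ a subtorus of $T$. Setting $C := A \cdot R_u(B)$, which is contractible, closed, and normal in $B$, we obtain $B = T \ltimes C$. The projection $\pi_T: B \to B/C = T$ satisfies $\pi_T(H) = T_H$ (the split and unipotent parts of $H$ lie in $C$), so it descends to $\bar{\pi}_T: \mathcal{O} \to T/T_H$. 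The fiber is $C/(C \cap H)$; since $C$ is a simply-connected solvable real Lie group and $C \cap H = A_H \cdot R_u(H)$ is a connected closed subgroup, the fiber is diffeomorphic to a Euclidean space and hence contractible. Therefore $\bar{\pi}_T$ is a homotopy equivalence and $\mathcal{O} \simeq T/T_H$, a compact torus. To conclude, the pullback of $\mathcal{L}$ to $T/T_H$ is a rank-one local system whose further pullback to $T$ is the nontrivial $\theta_p^*\mathcal{L}$, so it is itself nontrivial on $T/T_H$; applying \cref{simple_vanishing_result} to this nontrivial rank-one local system on the compact torus $T/T_H$ yields $H^*(\mathcal{O}, \mathcal{L}) \cong H^*(T/T_H, \mathcal{L}) = 0$. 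The main obstacle will be the geometric identification $\mathcal{O} \simeq T/T_H$: showing that $\bar{\pi}_T$ is indeed a fiber bundle with contractible fibers requires using the algebraicity of $H$ to align its Levi decomposition with that of $B$, and the fact that quotients of simply connected solvable real Lie groups by connected closed subgroups are diffeomorphic to Euclidean spaces.
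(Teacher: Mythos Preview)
Your argument for the first claim is essentially identical to the paper's: both use that $i:T\hookrightarrow B$ is a homotopy equivalence and that $\theta_{[H]}=\pi\circ i$, so triviality of $\pi^*\mathcal{L}$ and of $\theta_{[H]}^*\mathcal{L}$ coincide.

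For the vanishing, your proof is correct but takes a genuinely different route. The paper dispatches this in one line by citing Rosenlicht's theorem \cite{Rosenlicht1963}: any orbit $\mathcal{O}$ of a connected solvable linear group is isomorphic as a variety to $\mathbb{C}^n\times(\mathbb{C}^*)^m$, hence is homotopy equivalent to a compact torus, and \cref{simple_vanishing_result} applies directly to the (nontrivial, rank one) local system transported along that equivalence. You instead reconstruct this homotopy type by hand: reducing to connected $H$ via the transfer, decomposing $B=T\ltimes C$ with $C=A\cdot R_u(B)$ a simply connected real solvable group, and exhibiting $\bar\pi_T:\mathcal{O}\to T/T_H$ as a bundle with fiber $C/(C\cap H)$ diffeomorphic to Euclidean space. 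This is more work but is self-contained and yields the explicit identification $\mathcal{O}\simeq T/T_H$ compatible with the $T$-action, which the bare citation of Rosenlicht does not immediately give. The trade-off is that you invoke a nontrivial fact about quotients of simply connected solvable real Lie groups by connected closed subgroups being Euclidean, and you must check (as you correctly note) that the algebraic Levi decompositions of $H$ and $B$ align so that $C\cap H=A_H\cdot R_u(H)$ is connected; the paper avoids all of this by outsourcing the structure of $\mathcal{O}$ to Rosenlicht.
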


\begin{proof}
    Assume $\mathcal{L}$ is a rank one local system on $\mathcal{O}$. Since $B$ is homotopy equivalent to $T$, if $\pi^*\mathcal{L}$ is nontrivial, then $i^*\pi^*\mathcal{L}$ is nontrivial, where $i:T\to B$ is the inclusion map. But $i^*\pi^*=\theta_{[H]}^*$ by the very definition of $\theta_{[H]}^*$, so $\mathcal{L}$ is twisted.

    If $\mathcal{L}$ is a twisted local system on $\mathcal{O}$, then it is a successive extension of nontrivial rank one local systems on $\mathcal{O}$ by definition. By \cite{Rosenlicht1963}, $\mathcal{O}$ is isomorphic to $\mathbb{C}^n\times (\mathbb{C}^*)^n$, which is homotopy equivalent to a compact torus. The cohomology group $H^*(\mathcal{O},\mathcal{L})$ vanishes by \cref{simple_vanishing_result}.
\end{proof}

This lemma shows the definition of ``twisted" in the introduction coincides with our definition in this section. In particular, \emph{the notion of twistedness is independent of the choice of a maximal torus}.
Finally, we can prove in our setting, the hypercohomology of the dual of a twisted complex vanishes.

\begin{proposition}\label{spectral_argument}
    Suppose $X$ is an algebraic variety equipped with a $B$-action such that there are only finitely many orbits on $X$. Let $\mathfrak{X}$ be the stratification given by the orbits. If $\mathcal{F}^\cdot$ is constructible with respect to $\mathfrak{X}$, and its Verdier dual $D_X(\mathcal{F}^\cdot)$ is twisted, then
    \begin{equation*}
        H^*(X,\mathcal{F}^\cdot)=0.
    \end{equation*}
\end{proposition}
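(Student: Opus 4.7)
The plan is to use Verdier duality to switch from ordinary to compactly supported hypercohomology --- so that the twisted hypothesis becomes directly usable --- then to peel $X$ apart orbit by orbit with a stratification spectral sequence, and finally to apply Poincar\'e--Lefschetz duality on each smooth orbit to reduce compactly supported cohomology back to ordinary cohomology, where \cref{auxillary_result} takes over.

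In detail, set $\mathcal{G}^\cdot \coloneqq D_X(\mathcal{F}^\cdot)$, so that $\mathcal{F}^\cdot = D_X\mathcal{G}^\cdot$ is the Verdier dual of a twisted complex. Applying Verdier duality to the structural map $a\colon X\to\mathrm{pt}$ gives
\begin{equation*}
    H^i(X, \mathcal{F}^\cdot) \cong H^{-i}_c(X, \mathcal{G}^\cdot)^{\vee},
\end{equation*}
so it is enough to prove $H^*_c(X, \mathcal{G}^\cdot) = 0$. Since $X$ has only finitely many $B$-orbits, I order them $\mathcal{O}_1,\dots,\mathcal{O}_N$ so that $X_k \coloneqq \mathcal{O}_1\cup\dots\cup\mathcal{O}_k$ is closed in $X$ for every $k$; the resulting open/closed long exact sequences in $H^*_c$ assemble into a convergent spectral sequence with $E_1$-page $\bigoplus_k H^*_c(\mathcal{O}_k, \mathcal{G}^\cdot|_{\mathcal{O}_k})$, reducing the problem to the single-orbit statement $H^*_c(\mathcal{O}, \mathcal{G}^\cdot|_{\mathcal{O}}) = 0$. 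On a fixed orbit the hypercohomology spectral sequence
\begin{equation*}
    E_2^{p,q} = H^p_c\bigl(\mathcal{O},\, \mathcal{H}^q(\mathcal{G}^\cdot)|_{\mathcal{O}}\bigr) \Longrightarrow H^{p+q}_c(\mathcal{O}, \mathcal{G}^\cdot|_{\mathcal{O}})
\end{equation*}
further reduces matters to $H^*_c(\mathcal{O}, \mathcal{L}) = 0$ for each twisted local system $\mathcal{L} \coloneqq \mathcal{H}^q(\mathcal{G}^\cdot)|_{\mathcal{O}}$.

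For this last vanishing, by Rosenlicht the orbit $\mathcal{O}$ is a smooth orientable complex manifold, so Poincar\'e--Lefschetz duality gives $H^p_c(\mathcal{O}, \mathcal{L}) \cong H^{2d-p}(\mathcal{O}, \mathcal{L}^{\vee})^{\vee}$ with $d = \dim_{\mathbb{C}}\mathcal{O}$. The composition factors of $\mathcal{L}^{\vee}$ are the duals of those of $\mathcal{L}$, hence are again nontrivial rank one local systems on the compact torus, so $\mathcal{L}^{\vee}$ is again twisted and $H^*(\mathcal{O}, \mathcal{L}^{\vee}) = 0$ by \cref{auxillary_result}. The main step requiring care --- really the only place where the compact-torus formalism of $\mathcal{T}(X)$ is doing serious work --- is the bookkeeping that $\mathcal{H}^q(\mathcal{G}^\cdot)|_{\mathcal{O}}$ and its dual are twisted in the sense of \cref{auxillary_result}, i.e.\ that their pullback along the quotient $\pi\colon B\to\mathcal{O}$ has only nontrivial composition factors; this follows from the definition of $\mathcal{T}(X)$ together with the factorization $\theta_{[H]}^* = i^*\pi^*$ already exploited in the proof of that lemma.
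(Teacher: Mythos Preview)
Your proof is correct and follows essentially the same route as the paper's. The only cosmetic difference is where Verdier duality is invoked: the paper works directly with $H^{*}(X,\mathcal{F}^\cdot)$ and the spectral sequence $E_1^{pq}=H^{p+q}(X_p\setminus X_{p-1},\,i_p^{!}\mathcal{F}^\cdot)$, then unwinds $i_p^{!}\mathcal{F}^\cdot = D_{X_p\setminus X_{p-1}}(i_p^{*}D_X\mathcal{F}^\cdot)$ on each stratum; you instead dualize globally to $H^{*}_c(X,\mathcal{G}^\cdot)$ first, use the $i^{*}$-stratification spectral sequence for compactly supported cohomology, and then apply Poincar\'e--Lefschetz on each orbit. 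These are two packagings of the same computation, and indeed the paper's remark immediately after the proof records exactly your reformulation $H^{*}_c(X,\mathcal{G}^\cdot)=0$ for $\mathcal{G}^\cdot$ twisted.
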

\begin{proof}
    Let $X_p$ be the union of orbits which are of dimension smaller or equal than $p$. Consider the spectral sequence (see \cite[Section 3.4]{Beilinson1996})
    \begin{equation*}
        E_1^{pq}=H^{p+q}(X_p\backslash X_{p-1},i_p^!\mathcal{F}^\cdot) \implies H^{p+q}(X,\mathcal{F}^\cdot),
    \end{equation*}
    where $i_p:X_p\backslash X_{p-1}\to X$ is the inclusion map. We only need to show that each $E_1^{pq}$ vanishes. We have
    \begin{equation*}
        i_p^!\mathcal{F}\cong i_p^!D_X^2(\mathcal{F}^\cdot)=D_{X_p\backslash X_{p-1}}(i_p^* D_X(\mathcal{F}^\cdot)),
    \end{equation*}
    and since $D_X(\mathcal{F}^\cdot)$ is twisted, $i_p^* D_X(\mathcal{F}^\cdot)$ is also twisted. Notice that the induced stratification is trivial on $X_p\backslash X_{p-1}$, so each cohomology sheaf of $i_p^* D_X(\mathcal{F}^\cdot)$ a twisted local system when restricting to each orbit in $X_p\backslash X_{p-1}$. By the same reason, taking cohomology sheaf commutes with the Verdier dual $D_{X_p\backslash X_{p-1}}$ up to a shift, so the cohomology sheaves of $D_{X_p\backslash X_{p-1}}(i_p^* D_X(\mathcal{F}^\cdot))$ are the dual of local systems occurred in cohomology sheaves of $i_p^* D_X(\mathcal{F}^\cdot)$, which are twisted. We conclude that $E_1^{pq}$ term vanishes by \cref{auxillary_result}.
\end{proof}

\begin{remark}
    This proposition can be rephrased in another way: if $\mathcal{F}^\cdot$ is twisted constructible, then $H^*_c(X, \mathcal{F}^\cdot)=0$. This gives yet another example that compactly support cohomology works better than the ordinary one when dealing with stratification.
\end{remark}

By Deligne's construction (see for example \cite[Section 6.3]{Maxim2019}), the intersection complexes can be constructed by doing successive pushforward and truncation. As a corollary of \cref{reduction_to_derived_pushforward} and \cref{spectral_argument}, \cref{main_theorem} is reduced to the following proposition, which will be proved in the next section. 
\begin{proposition}\label{key_proposition}
    Suppose $X$ is an irreducible algebraic variety equipped with a locally linear $B$ action, with finitely many $B$-orbits. The maximal $B$-orbit $U$ is a dense open subset of $X$. For any twisted rank one local system $\mathcal{L}$ on $U$, the derived pushforward $Rj_*\mathcal{L}$ is a twisted constructible complex.
\end{proposition}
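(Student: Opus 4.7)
The plan is to localize the question to a neighborhood of each point of $X$, use the locally linear hypothesis to obtain a $T$-equivariant tubular neighborhood with a product-like structure, and then trace twistedness through a local cohomology computation.

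Fix a point $p \in X$ sitting in some $B$-orbit, and let $T_p \subseteq T$ denote its stabilizer in the compact torus. The stalk $\mathcal{H}^i(Rj_*\mathcal{L})_p$ is the direct limit of $H^i(N \cap U, \mathcal{L})$ over open neighborhoods $N$ of $p$, and it suffices to show this stalk is a twisted $\pi_1(T)$-module under the monodromy coming from $\theta_p$. Using the locally linear hypothesis together with Sumihiro's theorem, I would produce a $\mathbb{T}$-invariant affine open around $p$ and then use the compactness of $T$ (as suggested in the introduction) to construct a $T$-equivariant tubular neighborhood $N$ of $p$ modeled as $T \times^{T_p} S$ for some $T_p$-invariant slice $S$ through $p$. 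In this model, $N \cap U$ takes the form $T \times^{T_p} (S \cap U)$.

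With this local description in hand, I would compute $H^*(N \cap U, \mathcal{L})$ via the Leray spectral sequence for the projection $\pi \colon N \cap U \to T/T_p$, whose $E_2$-page has the form $H^*(T/T_p;\, R^q\pi_*(\mathcal{L}|_{N\cap U}))$ with the coefficient local system recording the $T_p$-monodromy on fiber cohomology. Since $\mathcal{L}$ is twisted, \cref{auxillary_result} tells us its pullback to $B$ along the orbit map is a nontrivial local system; via a path inside $U$ this nontriviality transfers into a nontrivial $\pi_1(T/T_p)$-action on the relevant summand of the $E_2$-page, which in turn pulls back to a twisted $\pi_1(T)$-representation through the finite cover $T \to T/T_p$ (exactly as in the Remark following the definition of $\mathcal{T}(X)$). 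Combining with the two-out-of-three property of \cref{trivial_property}, the abutment $H^*(N \cap U, \mathcal{L})$ is a twisted $\pi_1(T)$-module, which is what we needed.

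The main obstacle I expect is the Leray step: a priori the twisted character of $\mathcal{L}$ on $U$ could be carried entirely by the transversal slice directions and so fail to descend to a nontrivial character of $T/T_p$. Ruling this out requires comparing $\theta_p$ at the boundary point with $\theta_u$ at a generic point $u \in U$ and exploiting the rank one hypothesis to ensure the character of $\pi_1(U)$ actually detects $\pi_1(T/T_p)$ inside $\pi_1(U)$. A secondary subtlety, also acknowledged in the introduction, is the construction of the equivariant tubular neighborhood itself: since $B$ is solvable and not reductive, Luna's slice theorem is unavailable, and it is the compact torus $T$ rather than the affine torus $\mathbb{T}$ that makes the construction possible.
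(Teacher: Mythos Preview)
Your localization strategy and use of the equivariant tubular neighborhood are on target and match the paper's setup. The genuine gap is in how you handle your ``main obstacle.'' You frame it as something to be \emph{ruled out}: you want to show the twisted character of $\mathcal{L}$ always descends nontrivially to $\pi_1(T/T_p)$. But this is simply false in general. The restriction of the character to the directions along the orbit $T\cdot p$ can be trivial, with all the twist living in the stabilizer direction; no comparison of $\theta_p$ with $\theta_u$ will change that.

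The paper's resolution is a dichotomy you are missing. One first splits $T \cong T' \times T_p^\circ$ (using the identity component $T_p^\circ$ rather than $T_p$ itself, to get an honest product), and correspondingly writes $\mathcal{L}|_T = \mathcal{L}'\boxtimes\mathcal{L}''$. Twistedness of $\mathcal{L}$ forces at least one factor to be nontrivial. If $\mathcal{L}'$ is nontrivial, one argues essentially as you do: the tubular neighborhood becomes a product after a finite cover, and the twist propagates through a K\"unneth/base-change computation. But if $\mathcal{L}'$ is trivial and $\mathcal{L}''$ is nontrivial---precisely the case you were trying to exclude---the correct conclusion is that the stalk $(Rj_*\mathcal{L})_p$ \emph{vanishes}. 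One sees this by projecting $U\cap V$ onto the quotient by $T_p^\circ$; the fibers are $T_p^\circ$-orbits, and the restriction of $\mathcal{L}$ to each such fiber pulls back to the twisted $\mathcal{L}''$ on $T_p^\circ$, so the fiber cohomology vanishes by \cref{simple_vanishing_result}. A zero stalk is trivially twisted. Your Leray spectral sequence, with base $T/T_p$ and fiber the slice $S\cap U$, is oriented the wrong way to see this vanishing; the paper instead fibers over the slice with torus fibers in the stabilizer direction.
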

Here locally linear means any $B$-orbit has an open $B$-stable neighborhood that can be $B$-equivariantly embedded into $\mathbf{P}(M)$, where $M$ is a representation of $B$, $\mathbf{P}(M)$ means the projective space of $M$. A normal $B$-variety is locally linear, by a fundamental result of Sumihiro \cite{Sumihiro1974}, and so is the closure of any $B$-orbit on it.

\section{Pushforward of a twisted local system is twisted}
We will prove a slightly more general version of \cref{key_proposition} in this section. I will first list a few lemmas that we will use during the proof. The reader can directly jump to the proof of \cref{Main_tech_result} and return to these lemmas when needed. We begin with the following well-known lemma. We will use this lemma to describe the local structure about the orbit.
\begin{lemma}[{\cite[Section 1.2]{Timashev2011}}]\label{homogeneous_fiber_bundle}
    Suppose $X$ is a topological space equipped with a topological group $G$ action, and there is a $G$-equivariant map $f:X\to G/H$, where $H$ is a subgroup of $G$. Then $X$ is $G$-equivariantly isomorphic to $G\times_H Y$, where $Y=f^{-1}([H])$ is an $H$-space, and $G\times_H Y\coloneqq (G\times Y)/H$, here $H$ acts on $G\times Y$ by the formula $h\cdot (g,y)=(gh^{-1},hy)$. Moreover, there is a one-to-one correspondence between $G$-orbits in $X$ and $H$-orbits in $Y$, and the stabilizer groups are the same under this correspondence.
\end{lemma}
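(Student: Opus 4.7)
The plan is to construct an explicit $G$-equivariant isomorphism $\bar\Phi:G\times_H Y \to X$ induced by the action map, and then derive the orbit/stabilizer correspondence from elementary $G$-equivariance. First I would note that $Y=f^{-1}([H])$ is $H$-stable because $f$ is $G$-equivariant and $H$ is the stabilizer of $[H]\in G/H$, so $Y$ carries a natural $H$-action. The action map $\Phi:G\times Y\to X$, $(g,y)\mapsto g\cdot y$, satisfies $\Phi(gh^{-1},hy)=gh^{-1}\cdot hy=g\cdot y=\Phi(g,y)$ for $h\in H$, so it descends to a map $\bar\Phi:G\times_H Y\to X$ which is manifestly $G$-equivariant for the left $G$-action on $G\times_H Y$ by multiplication on the first factor.

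Next I would show $\bar\Phi$ is a bijection. For surjectivity, given $x\in X$ pick $g\in G$ with $f(x)=gH$; then $f(g^{-1}x)=g^{-1}\cdot f(x)=[H]$, so $y:=g^{-1}x\in Y$ and $x=\bar\Phi([g,y])$. For injectivity, if $g_1 y_1=g_2 y_2$ with $y_i\in Y$, applying $f$ gives $g_1[H]=g_2[H]$, so $h:=g_1^{-1}g_2\in H$, and then $(g_2,y_2)=(g_1 h, h^{-1}y_1)=h^{-1}\cdot(g_1,y_1)$ in $G\times Y$, so $[g_1,y_1]=[g_2,y_2]$ in $G\times_H Y$. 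To upgrade this bijection to an isomorphism in the relevant category one invokes local sections of the principal $H$-bundle $G\to G/H$ to construct a continuous (respectively algebraic) local inverse; this is the standard place where one uses that $G\to G/H$ is a decent quotient in the category under consideration.

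For the orbit and stabilizer correspondence, the key observation is that if $y\in Y$ and $g\in G$ satisfy $g\cdot y\in Y$, then $[H]=f(g\cdot y)=g\cdot[H]$ forces $g\in H$. This immediately yields $G_y=H_y$ for every $y\in Y$, since any $G$-stabilizer of $y$ already lies in $H$. It also shows that the composition $Y\hookrightarrow X\twoheadrightarrow X/G$ factors through an injection $Y/H\hookrightarrow X/G$, and surjectivity of the orbit correspondence follows from the surjectivity of $\bar\Phi$ together with the fact that $\bar\Phi([g,y])$ lies in the $G$-orbit of $y$. The only genuine obstacle is the topological (or algebro-geometric) promotion of the continuous bijection $\bar\Phi$ to an isomorphism; the combinatorial and set-theoretic content of the statement is essentially a one-line consequence of the observation that the preimage of $[H]$ under an equivariant map is fixed exactly by $H$.
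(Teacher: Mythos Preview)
The paper does not actually prove this lemma; it is stated with a citation to \cite[Section 1.2]{Timashev2011} and used as a black box. Your argument is the standard one and is correct: the action map descends to a $G$-equivariant bijection $G\times_H Y\to X$, and the orbit/stabilizer correspondence follows from the observation that any $g\in G$ sending a point of $Y$ into $Y$ must lie in $H$. You also correctly flag the only nontrivial point, namely that promoting the bijection to a homeomorphism requires local sections of $G\to G/H$; in the applications in this paper $G$ is a compact torus (or a linear algebraic group), so this is unproblematic.
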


\begin{lemma}\label{new_characterization_of_twistedness}
    Suppose $\mu$ is a finite subgroup of $T$, $A$ is a $\mu$-space. Let $c:T\times A\to T\times_\mu A$ be the map defined by quotient by $\mu$. Let $\mathcal{L}$ be a rank one local system on $T\times_\mu A$, and write $c^*\mathcal{L}=\mathcal{E}\boxtimes \mathcal{E'}$, where $\mathcal{E}$ is on $T$ and $\mathcal{E'}$ is on $A$. Then $\mathcal{L}$ is twisted if and only if $\mathcal{E}$ is twisted.
\end{lemma}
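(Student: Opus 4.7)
The plan is to reduce the claim to the statement that the orbit map pulls $\mathcal{L}$ back to something isomorphic to $\mathcal{E}$, up to a constant rank one factor which does not affect twistedness. Since $\mathcal{L}$ is twisted iff the pullback along \emph{any} orbit map is a twisted local system on $T$ (as noted after the definition of $\mathcal{T}(X)$), this equivalence will finish the proof.

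First I would fix a point $p=[t_0,a_0]\in T\times_\mu A$ and write down the $T$-orbit map $\theta_p\colon T\to T\times_\mu A$, $\theta_p(t)=[tt_0,a_0]$. The key geometric observation is that this orbit map lifts through the covering $c$: defining $\tilde{\theta}_p\colon T\to T\times A$ by $\tilde{\theta}_p(t)=(tt_0,a_0)$, we have $c\circ\tilde{\theta}_p=\theta_p$, so
\begin{equation*}
    \theta_p^*\mathcal{L}\;\cong\;\tilde{\theta}_p^*\,c^*\mathcal{L}\;\cong\;\tilde{\theta}_p^*(\mathcal{E}\boxtimes\mathcal{E}').
\end{equation*}
Since $\tilde{\theta}_p$ is the product of the translation map $\tau_{t_0}\colon T\to T$, $t\mapsto tt_0$, and the constant map $T\to A$ with value $a_0$, the pullback splits as $\tau_{t_0}^*\mathcal{E}\otimes(\mathcal{E}'_{a_0})_T$, where the second factor is the constant rank one local system on $T$ with fiber $\mathcal{E}'_{a_0}$.

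Next, because $T$ is a connected topological group, the translation $\tau_{t_0}$ is homotopic to the identity (pick any path from $e$ to $t_0$ and translate), hence $\tau_{t_0}^*\mathcal{E}\cong\mathcal{E}$. Tensoring with a constant rank one local system does not change the monodromy representation (it multiplies the character $\pi_1(T)\to\mathbb{C}^\times$ by the trivial character on each generator), so we obtain $\theta_p^*\mathcal{L}\cong\mathcal{E}$ as rank one local systems on $T$. Thus $\theta_p^*\mathcal{L}$ is twisted (i.e.\ nontrivial, since it has rank one) iff $\mathcal{E}$ is twisted, which is the desired equivalence.

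The only subtle ingredient is the decomposition $c^*\mathcal{L}=\mathcal{E}\boxtimes\mathcal{E}'$ itself; I would justify it by noting that $c^*\mathcal{L}$ is rank one, so it corresponds to a character $\chi\colon\pi_1(T\times A)\to\mathbb{C}^\times$, and since $\pi_1(T\times A)\cong\pi_1(T)\times\pi_1(A)$ and the target is abelian, $\chi$ splits as a product of characters on the two factors, yielding the external tensor decomposition. Beyond that, the argument is essentially formal once the correct orbit-map lift is written down; I do not anticipate any real obstacle.
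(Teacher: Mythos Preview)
Your argument is correct and follows essentially the same route as the paper: lift the orbit map through the finite cover $c$ and read off the pullback from the box product. The paper streamlines the computation by choosing the base point $[e,a_0]$ from the outset, so that the lifted orbit map is literally $t\mapsto(t,a_0)$ and $\theta_{[e,a_0]}^*\mathcal{L}=\mathcal{E}$ on the nose, avoiding your translation-homotopy step; your extra justification of the box decomposition is fine but is already implicit in the lemma's hypothesis.
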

\begin{proof}
    Pick any point $p$ in $A$. We will denote the image of $(t,p)$ in $T\times_\mu A$ by $[t,p]$. The orbit map defined by the point $[e,p]$ is exactly the composition of $T\times \{p\}\hookrightarrow T\times A \to T\times_\mu A$ ($t\cdot [e,p]=[t,p]$). Therefore, $\mathcal{L}$ is twisted if and only if $c^*\mathcal{L}|_T=\mathcal{E}$ is twisted.     
\end{proof}

Suppose $X$ is a pseudomanifold equipped with a $T=(S^1)^n$ action. We call the action \emph{topologically locally linear} if for every $T$-orbit $\mathcal{O}\subset X$, there is a smooth $T$-manifold $N$ and a $T$-stable open neighborhood $V$ of $\mathcal{O}$, such that $V$ can be $T$-equivariantly embedded into $N$. In particular, being locally linear in \cref{key_proposition} implies being topologically locally linear here.

\begin{lemma}\label{local_linearity}
    Suppose $X$ is a pseudomanifold equipped with a topologically locally linear $T=(S^1)^n$ action, $\mathcal{O}=T\cdot p$ is an orbit. There is a $T$-stable open neighborhood $V$ of $\mathcal{O}$, which admits a $T$-equivariant retraction $\pi:V\to \mathcal{O}$.
\end{lemma}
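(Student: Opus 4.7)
The plan is to reduce the problem to the classical smooth equivariant tubular neighborhood theorem via the topological local linearity hypothesis. By assumption, there exist a smooth $T$-manifold $N$, a $T$-stable open neighborhood $V'\subset X$ of $\mathcal{O}$, and a $T$-equivariant embedding $\iota:V'\hookrightarrow N$. Under $\iota$ the orbit $\mathcal{O}$ becomes a $T$-orbit in $N$, and since $T=(S^1)^n$ is compact its image is a compact, hence closed, smooth submanifold of $N$.

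Next I would invoke the equivariant tubular neighborhood theorem for smooth actions of compact Lie groups (a standard reference being Bredon's \emph{Introduction to Compact Transformation Groups}). The essential input is that compactness of $T$ allows one to average any Riemannian metric on $N$ over $T$ to obtain a $T$-invariant metric; this makes the normal bundle $\nu(\mathcal{O},N)$ into a $T$-equivariant vector bundle and the exponential map into a $T$-equivariant local diffeomorphism near the zero section. Restricting to a suitable $T$-invariant open neighborhood of the zero section yields a $T$-stable open tubular neighborhood $W$ of $\mathcal{O}$ in $N$ together with a $T$-equivariant smooth retraction $r:W\to\mathcal{O}$ given by the normal bundle projection.

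Finally, I would set $V:=\iota^{-1}(W)$. This is open in $V'$ (hence in $X$) because $W$ is open in $N$; it contains $\mathcal{O}$ by construction; and it is $T$-stable because both $\iota$ and $W$ are $T$-equivariant. The composition $\pi:=r\circ\iota|_V$ is then a $T$-equivariant map $V\to\mathcal{O}$ restricting to the identity on $\mathcal{O}$, giving the required $T$-equivariant retraction.

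The only step of substance is the existence of the equivariant tubular neighborhood, and this is exactly where the compactness of $T$ is indispensable: both metric averaging and the closedness of orbits fail for non-compact groups. This is precisely the technical reason emphasized in the introduction for working with the compact subtorus $T\subset\mathbb{T}$ rather than with the affine torus $\mathbb{T}$ itself.
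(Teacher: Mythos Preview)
Your proof is correct and follows essentially the same route as the paper: embed a $T$-stable neighborhood into a smooth $T$-manifold via the local linearity hypothesis, apply the equivariant tubular neighborhood theorem (Bredon) to obtain a $T$-equivariant retraction onto the orbit, and then pull everything back to $X$ by intersecting with the original neighborhood. Your additional remarks on metric averaging and the role of compactness of $T$ are accurate elaborations of what the paper leaves implicit in its citation of Bredon.
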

\begin{proof}
    This is a corollary of the existence of the equivariant tubular neighborhood, which is a standard result (see for example \cite[Theorem 2.2]{Bredon1972296}). Since the action is topologically locally linear, we can find a $T$-stable open neighborhood $W$ of $T\cdot p$, such that $W$ can be $T$-equivariantly embedded into a smooth $T$-manifold $M$. We can find an equivariant tubular neighborhood $\widetilde{V}$ of $T\cdot p$ in $M$ with an equivariant retraction $\widetilde{\pi}:\widetilde{V}\to T\cdot p$. Let $V=\widetilde{V}\cap W$ and $\pi=\widetilde{\pi}|_V$, we conclude that there is an open $T$-invariant neighborhood of $T\cdot p$ in $X$ with an equivariant retraction $\pi:V\to T\cdot p$.
\end{proof}

\begin{theorem}\label{Main_tech_result}
    Suppose $X$ is an irreducible pseudomanifold equipped with a topologically locally linear $T=(S^1)^n$ action, and a $T$-stable stratification $\mathfrak{X}$. Let $j:U\to X$ be the maximal dimensional stratum, which we suppose admits a $T$-equivariant morphism to $T/\mu$, where $\mu\subset T$ is a finite subgroup. Then for any twisted rank one local system $\mathcal{L}$ on $U$, the derived pushforward $Rj_* \mathcal{L}$ is a twisted $T$-constructible complex.
\end{theorem}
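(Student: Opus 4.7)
The plan is to check twistedness locally near each $T$-orbit $\mathcal{O}\subset X$, using the equivariant tubular neighborhood from \cref{local_linearity} to reduce the computation of $Rj_*\mathcal{L}$ to an external tensor product on a $T_p$-cover.

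Pick a point $p\in\mathcal{O}$ with stabilizer $T_p\subset T$. By \cref{local_linearity} there is a $T$-stable open neighborhood $V\supset\mathcal{O}$ together with a $T$-equivariant retraction $\pi:V\to\mathcal{O}$. Setting $F=\pi^{-1}(p)$, \cref{homogeneous_fiber_bundle} gives a $T$-equivariant identification $V\cong T\times_{T_p}F$, and any $T$-stable open subset of $V$ corresponds to a $T_p$-stable open subset of $F$, so $V\cap U\cong T\times_{T_p}F'$ where $F'\coloneqq F\cap U$. Let $j':F'\hookrightarrow F$ denote the open inclusion and $c_1:T\times F'\to V\cap U$, $c_2:T\times F\to V$ the $T_p$-quotient maps. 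The resulting square
\[
\begin{tikzcd}
T\times F' \arrow[r, "\mathrm{id}\times j'"] \arrow[d, "c_1"'] & T\times F \arrow[d, "c_2"] \\
V\cap U \arrow[r, "j_V"'] & V
\end{tikzcd}
\]
is Cartesian (the preimage under $c_2$ of $T\times_{T_p}F'$ is $T\times F'$, since $F'$ is $T_p$-stable), and $c_2$ is proper since its fibers are the compact group $T_p$. Proper base change yields $c_2^*\,R(j_V)_*\mathcal{L}\cong R(\mathrm{id}\times j')_*\,c_1^*\mathcal{L}$.

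On each connected component of $T\times F'$, the fundamental group is a direct product, so the rank one local system $c_1^*\mathcal{L}$ decomposes as an external tensor product $\mathcal{E}_T\boxtimes\mathcal{E}_{F'}$, with $\mathcal{E}_T$ a rank one local system on $T$. For any $f_0$ in a component of $F'$, the orbit map $\theta_{[e,f_0]}:T\to V\cap U$ factors as $t\mapsto c_1(t,f_0)$ through the horizontal slice $T\times\{f_0\}$, so the pullback of $\mathcal{L}$ along it agrees with $\mathcal{E}_T$ up to tensoring by the one-dimensional stalk $(\mathcal{E}_{F'})_{f_0}$. The hypothesis that $\mathcal{L}$ is twisted on $U$ therefore forces $\mathcal{E}_T$ to be twisted on every component. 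A K\"unneth formula for the open inclusion $\mathrm{id}\times j'$ then gives
\[
R(\mathrm{id}\times j')_*(\mathcal{E}_T\boxtimes\mathcal{E}_{F'})\cong\mathcal{E}_T\boxtimes Rj'_*\mathcal{E}_{F'}.
\]

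To conclude, I would verify twistedness of $R(j_V)_*\mathcal{L}$ at an arbitrary $q\in V$. After $T$-translation we may take $q=[e,f_0]$ for some $f_0\in F$, with the orbit map lifting through $c_2$ as $\tilde\theta_q:T\to T\times F$, $t\mapsto(t,f_0)$. Then
\[
\theta_q^*\,\mathcal{H}^i\bigl(R(j_V)_*\mathcal{L}\bigr)\;\cong\;\mathcal{E}_T\otimes\bigl(R^i j'_*\mathcal{E}_{F'}\bigr)_{f_0},
\]
a finite direct sum of copies of the twisted local system $\mathcal{E}_T$, hence twisted. (If $f_0$ lies in a component of $F$ disjoint from $F'$, the stalk vanishes and the conclusion is trivial.) Varying $\mathcal{O}$ over all orbits yields the result on all of $X$. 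I expect the main obstacle to be the local-model bookkeeping: identifying $V\cap U$ with $T\times_{T_p}F'$, verifying the square is Cartesian with $c_2$ proper so that proper base change and the K\"unneth formula apply, and handling possibly disconnected $F'$ carefully when invoking the $\boxtimes$-decomposition. Once these foundations are in place, the external-product structure of $c_2^*R(j_V)_*\mathcal{L}$ reduces twistedness at every point of $V$ to the already-established twistedness of the single factor $\mathcal{E}_T$.
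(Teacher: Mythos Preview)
Your argument is correct and in fact streamlines the paper's proof. The paper chooses a splitting $T\cong T'\times T_p^\circ$ and treats two cases separately: when the twist lives in the $T_p^\circ$-factor it shows the stalk of $Rj_*\mathcal{L}$ at $p$ \emph{vanishes}, invoking the hypothesis that $U$ admits a $T$-equivariant map to $T/\mu$ to write $U\cap V\cong T_p^\circ\times_{\mu'}Y'$; when the twist lives in the $T'$-factor it passes to the \emph{finite} cover $T'\times Y\to T'\times_{T'_p}Y$ and runs exactly the base-change/K\"unneth computation you give. By covering with the full stabilizer $T_p$ (allowed to be positive-dimensional and disconnected) you fold both cases into a single K\"unneth step and never touch the $T/\mu$ hypothesis --- so your argument actually proves a slightly stronger statement. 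The paper's split buys something modest in exchange: Case~1 makes the vanishing explicit, and in Case~2 the covering map is finite, so the base change needed is the most elementary (\'etale) instance.

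One point to tighten: the isomorphism $c_2^*R(j_V)_*\mathcal{L}\cong R(\mathrm{id}\times j')_*c_1^*\mathcal{L}$ is not ``proper base change'' in the usual sense. Properness of $c_2$ alone does not force pullback along $c_2$ to commute with the open pushforward $R(j_V)_*$ (think of $c_2$ the inclusion of a point not in $U$). What you actually use is that $c_2$ is a principal $T_p$-bundle for a compact Lie group, hence a locally trivial fibration, so \emph{smooth} base change applies; a stalkwise check in a local trivialization confirms this. In the paper's Case~2 the analogous map is a finite cover, where the distinction disappears; in your generality it matters, so cite the correct version.
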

\begin{proof}
    We must show that for any point $p\in X$, $\theta_p^*(R^qj_*\mathcal{L})$ is twisted for any number $q$. To calculate the derived pushforward, we want to find a good neighborhood of the orbit $T\cdot p$. This neighborhood is an analogy of the tubular neighborhood. There are two cases: either $\mathcal{L}$ has nontrivial monodromy around the orbit, in this case we expect the derived pushforward is zero; or $\mathcal{L}$ has some nontrivial monodromy in the orbit, in this case we expect the derived pushforward also has the same monodromy therefore nontrivial. The small loop that goes around the orbit is (in philosophy) given by the circle in the stabilizer group $T_p$, and the small loop that lives in the orbit should be given by the circle in the other summand that is isomorphic to quotient group $T/T_p$. However, in general $T_p$ is not connected, so there is no such splitting.
    
    Instead, we choose a splitting of the exact sequence
    \begin{equation*}
        1\to T_p^\circ\to {T}\to {T}/T_p^\circ\to 1,
    \end{equation*}
    where $\circ$ represents the identity component, and the subscription $p$ means the stabilizer group of $p$. We conclude that there is a subtorus $T'\cong T/T_p^\circ$ such that $T\cong T'\times T_p^\circ$. 

    For a twisted local system $\mathcal{L}$ on $U$, the pullback of $\mathcal{L}$ along any orbit map $T\to T\cdot x$ does not depend on the choice of $x$, and we will denote it by $\mathcal{L}|_T$. Since $\mathcal{L}|_T$ is of rank one, it can be written as a box product $\mathcal{L}|_T=\mathcal{L'}\boxtimes \mathcal{L''}$, where $\mathcal{L'}$ is on $T'$ and $\mathcal{L''}$ is on $T_p^\circ$. Since $\mathcal{L}|_T$ is twisted by our assumption, either $\mathcal{L'}$ or $\mathcal{L''}$ is twisted. This corresponds to the two cases we discussed before.

    If $\mathcal{L''}$ is twisted, then $T_p^\circ$ is not trivial which means there is some nontrivial monodromy around the orbit. In this case we expect that the derived pushforward vanishes at point $p$. Since $T_p^\circ$ fixes $p$, by \cref{auxillary_result}, we can find an open $T_p^\circ$-stable open neighborhood $V$ of $p$. The open subset $U$ is $T$-stable hence $T_p^\circ$-stable. By further mapping $T/\mu$ to $T/(\mu+T')\cong T_p^\circ/((T'+\mu)/T')$ (notice that this isomorphism is $T_p^\circ$-equivariant), we conclude from \cref{homogeneous_fiber_bundle} that $U\cong T_p^\circ\times_{\mu'} Y$ for some $\mu'$-space $Y$, where $\mu'=(T'+\mu)/T'$ is again a finite group. By \cref{homogeneous_fiber_bundle}, $U\cap V\cong T_p^\circ\times_{\mu'}Y'$, for some $\mu'$-stable subspace $Y'$ of $Y$. This is the desired local structure of the torus. It is nearly a product structure. Consider the map $\pi:T_p^\circ\times_{\mu'}Y'\to Y'/{\mu'}$, which is just the second projection module the action of $\mu'$. The fibers of this map is exactly the $T_p^\circ$-orbits in $U\cap V$. The derived pushforward $R\pi_*\mathcal{L}=0$ because its stalk at any point $y\in Y$ which corresponds to $T_p^\circ\cdot q$ is $$H^*(\pi^{-1}(y),\mathcal{L})=H^*(T_p^\circ\cdot q,\mathcal{L}).$$ Notice that the pullback of $\mathcal{L}|_{T_p^\circ\cdot q}$ by the map $T_p^\circ\to T_p^\circ\cdot q$ is exactly $(\mathcal{L}|_T)|_{T_p^\circ}=\mathcal{L''}$, which is twisted, hence the cohomology group vanishes by \cref{simple_vanishing_result}. Therefore, $$H^*(U\cap V, \mathcal{L})=H^*(Y'/\mu',R\pi_*\mathcal{L})=0.$$

    We conclude $$(Rj_* \mathcal{L})_p=\varinjlim_{p\in V}H^*(V\cap U, \mathcal{L})=0$$ by the fact that $T$-invariant open neighborhoods of $p$ form a local basis near $p$. Since $p$ can be chosen arbitrarily, we conclude that $Rj_* \mathcal{L}|_{T\cdot p}=0$ in this case hence twisted.

    If $\mathcal{L'}$ is twisted, then $T'$ is nontrivial, which means $\mathcal{L}$ is twisted in the sense of $T'$-action. This is the case when the nontrivial monodromy lives in the orbit. To calculate the derived pushforward, we must find a local structure. By \cref{local_linearity}, there is a $T$-stable open neighborhood $V$ of $T\cdot p$ in $X$ with an equivariant retraction $\pi:V\to T\cdot p$. The stabilizer of $T'$ action at $p$ is $T'\cap T_p$ which is a finite group since $T'\cap T_p^\circ=\{e\}$. Therefore, $T\cdot p=T'\cdot p\cong T'/T'_p$. 
    Apply \cref{homogeneous_fiber_bundle} to the morphism $\pi$, there is a $T'_p$-space $Y$ that $V\cong T'\times_{T'_p} Y$ $T'$-equivariantly. Since $U$ is $T'$-stable, there is a $T'_p$-stable open subset $Y'\subset Y$ such that $U\cap V\cong T'\times_{T'_p}Y'$ by \cref{homogeneous_fiber_bundle} again. We have now a Cartesian diagram.
    \begin{center}
        \begin{tikzcd}
            T'\times Y' \arrow[r, "c'"] \arrow[d, "j'"] & U\cap V\cong T'\times_{T'_p} Y' \arrow[d, "j"] \\
            T'\times Y \arrow[r, "c"]      & V\cong T'\times_{T'_p} Y           
        \end{tikzcd}
    \end{center}
     
    After taking a finite cover, we get a product structure, and the derived pushforward is easy to calculate in this neighborhood. The inclusion map $j'$ decomposes as $j'=\mathrm{Id}_{T'}\times j''$, where $j'':Y'\to Y$ is the inclusion. If we write $c'^*\mathcal{L}=\mathcal{E}\boxtimes \mathcal{E'}$, then by \cref{new_characterization_of_twistedness}, $\mathcal{E}$ is twisted. 
        
    We now calculate the derived pushforward by K\"unneth's formula:
    \begin{equation*}
            R^qj'_* c'^*\mathcal{L}\cong R^q(\mathrm{Id}_{T'}\times j'')_* (\mathcal{E}\boxtimes \mathcal{E'})=\mathcal{E}\boxtimes R^qj''_* \mathcal{E'}.
    \end{equation*}
    According to the proper base change theorem, $R^qj'_* c'^*\mathcal{L}\cong c^*R^qj_*\mathcal{L}$. Because $\mathcal{E}$ is twisted, $R^qj_*\mathcal{L}$ is twisted in the sense of $T'$-action by \cref{new_characterization_of_twistedness}. It follows directly from the definition that a rank one twisted local system in the sense of $T'$-action must be a twisted local system in the sense of $T$-action (pulling back along any $T$-orbit map then restricting to $T'$ is the same as pulling back along the $T'$-orbit map) and we are done.
\end{proof}

To prove \cref{key_proposition}, we only need to check the variety in that proposition satisfies every assumption in \cref{Main_tech_result}.

\begin{lemma}
    Let $\mathcal{O}=B/H$ be a homogeneous space under the $B$-action. There exists an affine subtorus $\mathbb{T}$ of $B$ such that for $T\subset \mathbb{T}$ its maximal compact torus  there is a $T$-equivariant map $p$ from $\mathcal{O}$ to $T/\mu$, where $\mu$ is a finite subgroup of $T$. For such torus $T$, a rank one local system $\mathcal{L}$ on $\mathcal{O}$ is twisted for $T$-action if and only if its pulling back to $B$ is nontrivial.
\end{lemma}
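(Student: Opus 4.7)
The plan is to reduce to a purely toric computation by using the Levi decomposition $B = R_u(B) \rtimes \mathbb{T}_{\max}$ of $B$ into its unipotent radical and a maximal torus. First I would set $K = H \cdot R_u(B)$, a closed subgroup of $B$ containing both $H$ and $R_u(B)$, and observe that $K/R_u(B)$ is the image $\overline{H}$ of $H$ under the Levi projection $B \to \mathbb{T}_{\max}$, a closed subgroup of $\mathbb{T}_{\max}$. Up to conjugating $\mathbb{T}_{\max}$, the identity component $\overline{H}^\circ$ is the maximal torus of $H^\circ$ sitting inside $\mathbb{T}_{\max}$, and $F = \overline{H}/\overline{H}^\circ$ is finite. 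The inclusion $H \subset K$ gives a canonical $B$-equivariant surjection $\mathcal{O} = B/H \to B/K \cong \mathbb{T}_{\max}/\overline{H}$.

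The crucial step is to choose $\mathbb{T} \subset \mathbb{T}_{\max}$ as a genuine \emph{algebraic direct complement} of $\overline{H}^\circ$, i.e. a subtorus such that multiplication $\mathbb{T} \times \overline{H}^\circ \to \mathbb{T}_{\max}$ is an isomorphism of algebraic groups. Such a $\mathbb{T}$ exists because $\mathbb{T}_{\max}/\overline{H}^\circ$ is itself a torus, so the cocharacter sublattice $X_*(\overline{H}^\circ) \subset X_*(\mathbb{T}_{\max})$ has torsion-free quotient and is therefore a direct $\mathbb{Z}$-summand. Under this splitting $\mathbb{T}_{\max}/\overline{H} \cong \mathbb{T}/\mu$, where $\mu \subset \mathbb{T}$ is the (finite) image of $F$. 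Letting $T$ be the maximal compact subtorus of $\mathbb{T}$ and $A$ a real vector complement with $\mathbb{T} = T \cdot A$, one has $\mu \subset T$, the quotient decomposes as $\mathbb{T}/\mu = (T/\mu) \times A$, and the first projection is $T$-equivariant since $T$ acts trivially on $A$. Composing with the surjection above yields the required $T$-equivariant map $\mathcal{O} \to T/\mu$.

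For the second statement I would analyze induced maps on fundamental groups via the homotopy long exact sequence of the fibration $H \hookrightarrow B \to \mathcal{O}$. Using $\pi_1(B) = X_*(\mathbb{T}_{\max})$ and $\pi_1(H) = \pi_1(H^\circ) = X_*(\overline{H}^\circ)$, the image of $\pi_* : \pi_1(B) \to \pi_1(\mathcal{O})$ is identified with $X_*(\mathbb{T}_{\max})/X_*(\overline{H}^\circ)$. Because $\mathbb{T}$ is a direct summand, the composition $X_*(\mathbb{T}) \hookrightarrow X_*(\mathbb{T}_{\max}) \to \mathrm{im}(\pi_*)$ is an isomorphism, so the image of the orbit map $\theta_{[H],*} = \pi_* \circ i_*$, where $i : T \hookrightarrow B$ is the inclusion, coincides with $\mathrm{im}(\pi_*)$ as a subgroup of $\pi_1(\mathcal{O})$. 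Consequently a character $\chi$ of $\pi_1(\mathcal{O})$ corresponding to a rank one local system $\mathcal{L}$ is trivial on $\mathrm{im}(\pi_*)$ iff it is trivial on $\mathrm{im}(\theta_{[H],*})$, which translates back to $\pi^*\mathcal{L}$ trivial iff $\theta_{[H]}^*\mathcal{L}$ trivial, exactly the asserted equivalence.

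The main obstacle is insisting on a genuine direct product rather than merely an isogeny: if one only required $\mathbb{T} \cap \overline{H}^\circ$ to be finite, the image of $X_*(\mathbb{T})$ in $\pi_1(\mathcal{O})$ would sit as a proper finite-index subgroup of $\mathrm{im}(\pi_*)$, and a nontrivial character of the finite quotient could then pull back nontrivially to $B$ while pulling back trivially to $T$, breaking the equivalence. The primitivity of $X_*(\overline{H}^\circ)$ inside $X_*(\mathbb{T}_{\max})$---a consequence of the quotient of an algebraic torus by a subtorus being again a torus---is precisely what guarantees the existence of the direct summand complement needed to rule this out.
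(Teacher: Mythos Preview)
Your argument is correct. The construction of $\mathbb{T}$ is essentially the same as the paper's: the paper splits the exact sequence $1 \to NH^\circ \to B \to B/NH^\circ \to 1$ (here $N = R_u(B)$), and since $NH^\circ/N = \overline{H}^\circ$ under the Levi identification, the resulting $\mathbb{T}$ agrees with your direct complement of $\overline{H}^\circ$ inside $\mathbb{T}_{\max}$. The map $\mathcal{O} \to T/\mu$ is then obtained identically in both cases via $B/H \to B/NH$.

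For the equivalence in the second statement the two proofs diverge. The paper argues geometrically: it invokes its earlier homogeneous-fiber-bundle lemma to write $\mathcal{O} \cong T \times_\mu A$ with $A = NH/H \cong N/(N\cap H)$ a quotient of unipotent groups, hence simply connected; then its Lemma on $T \times_\mu A$ (pulling back along the $\mu$-cover) reduces twistedness of $\mathcal{L}$ to nontriviality of the $T$-factor of $c^*\mathcal{L}$, and simple connectedness of $A$ forces the $A$-factor to be trivial, so this is the same as nontriviality of the pullback along $B/H^\circ \to B/H$, hence along $B \to B/H$. You instead run the homotopy long exact sequence of $H \hookrightarrow B \to \mathcal{O}$ and compute directly with cocharacter lattices, showing $\mathrm{im}(\theta_{[H],*}) = \mathrm{im}(\pi_*)$ inside $\pi_1(\mathcal{O})$ because $X_*(\mathbb{T})$ is a genuine direct summand. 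Your approach is cleaner in isolating exactly why the direct-complement (rather than isogeny-complement) choice is essential, and it avoids appealing to the auxiliary lemmas of the paper; the paper's approach has the advantage of reusing the $T \times_\mu A$ framework already set up for the main pushforward theorem, and makes the role of the contractible unipotent fiber more visible.
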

\begin{proof}
    Let $N$ be the maximal unipotent subgroup of $B$. Consider the projection $\pi:B/H\to B/NH$. Let $H^\circ$ denote the identity component of $H$, then we have a split short exact sequence
    \begin{equation*}
        1\to NH^\circ \to B\to B/NH^\circ\to 1.
    \end{equation*}
    It splits because $B/NH^\circ\cong (B/N)/(NH^\circ/N)$ is a quotient group of a torus hence also a torus, and $NH^\circ$ is connected. Therefore, we can find $\mathbb{T}\subset B$ such that $\mathbb{T}\cap NH^\circ=\{e\}$ and $\mathbb{T}\cong B/NH^\circ$. The projection $\pi$ is $B$-equivariant hence also $\mathbb{T}$-equivariant, and $\mathbb{T}$ acts on $B/NH$ transitively, with the stabilizer at $[NH]$ equals $\mathbb{T}\cap NH$ which is a finite subgroup $\mu\subset \mathbb{T}$. The map $\pi$ now becomes a $\mathbb{T}$-equivariant map from $\mathcal{O}$ to $\mathbb{T}/\mu$. Any finite subgroup must lie in the maximal compact subtorus $T$, so the quotient map $\mathbb{T}\to T$ induces a $T$-equivariant map $\mathbb{T}/\mu\to T/\mu$. By composing with $\pi$ we obtain the map we seek.

    Suppose $\mathcal{L}$ is a rank one local system on $\mathcal{O}$. By \cref{homogeneous_fiber_bundle}, there is a $T$-equivariant isomorphism $\mathcal{O}\cong T\times_\mu A$, where $A=p^{-1}([\mu])$. By our construction above, $[\mu]=[NH]$, $A\simeq NH/H\simeq N/(N\cap H)$, so $A$ is simply connected. Let $c:T\times A\to \mathcal{O}$, then $c^*\mathcal{L}=\mathcal{E}\boxtimes \mathbb{Q}_A$ since $A$ is simply connected. By \cref{new_characterization_of_twistedness}, $\mathcal{L}$ is twisted if and only $\mathcal{E}$ is twisted, in other words if and only if $c^*\mathcal{L}$ is nontrivial. Notice that the map $c$ is exactly $B/H^\circ\to B/H$. Since $B\to B/H^\circ$ has connected fiber, that $\mathcal{L}$ is twisted is also equivalent to that the pullback of $\mathcal{L}$ to $B$ is nontrivial.
\end{proof}

Therefore, \cref{key_proposition} follows from \cref{Main_tech_result}. Notice that although the conclusion of \cref{Main_tech_result} is that the pushforward of a rank one $T$-twisted local system is again $T$-twisted, the $T$-twistedness implies the pullback to $B$ is twisted. To be more clear: one starts with a twisted local system (in the sense of $B$-action) on $\mathcal{O}$, then chooses a $T$ using this lemma. The local system is $T$-twisted by this lemma. The \cref{Main_tech_result} tells us the derived pushforward is again $T$-twisted, and this implies it is twisted in the sense of $B$ action. Therefore, \cref{key_proposition} is proved. Now we are in a position to prove \cref{main_theorem}.

\begin{proof}[Proof of \cref{main_theorem}]
    By \cref{simple_vanishing_result}, we only need to show that the Verdier dual of an intersection complex with twisted coefficient $\IC_{\bar{p}}(\mathcal{L})$ is twisted. But its Verdier dual is $\IC_{\bar{q}}(\mathcal{L}^\vee)$ up to a shift, where $\bar{q}$ is the dual perversity, and $\mathcal{L}^\vee$ is the dual local system which is also twisted. Therefore, we need to show any intersection complex with twisted coefficient is twisted. By Deligne's construction, we have
    \begin{equation*}
        \IC_{\bar{p}}(\mathcal{L})=\tau_{\leq \bar{p}(n)-n}R{j_n}_*\cdots\tau_{\leq \bar{p}(2)-n}R{j_2}_*\mathcal{L}[n],
    \end{equation*}
    where $j_k:U_k\to U_{k+1}$ is the inclusion, and $U_k=X-X_{n-k}$. By \cref{reduction_to_derived_pushforward}, \cref{key_proposition}, the complex is twisted. We conclude from \cref{spectral_argument} that the hypercohomology group of $\IC_{\bar{p}}(\mathcal{L})$ vanishes, i.e. $IH^*_{\bar{p}}(X,\mathcal{L})=0.$
\end{proof}

\providecommand{\bysame}{\leavevmode\hbox to3em{\hrulefill}\thinspace}
\providecommand{\MR}{\relax\ifhmode\unskip\space\fi MR }
\providecommand{\MRhref}[2]{%
  \href{http://www.ams.org/mathscinet-getitem?mr=#1}{#2}
}
\providecommand{\href}[2]{#2}


\begin{thebibliography}{dCMM18}

\bibitem[BGS96]{Beilinson1996}
Alexander Beilinson, Victor Ginzburg, and Wolfgang Soergel, \emph{Koszul
  duality patterns in representation theory}, Journal of the American
  Mathematical Society \textbf{9} (1996), no.~2, 473--527.

\bibitem[Bre72]{Bredon1972296}
\emph{Chapter vi: Smooth actions}, Introduction to Compact Transformation
  Groups (Glen~E. Bredon, ed.), Pure and Applied Mathematics, vol.~46,
  Elsevier, 1972, pp.~296--368.

\bibitem[Bri21]{Brion2021}
Michel Brion, \emph{Homogeneous varieties under split solvable algebraic
  groups}, 2021.

\bibitem[dCMM18]{deCataldoMiglioriniMustata}
Mark~Andrea de~Cataldo, Luca Migliorini, and Mircea Mustaţă,
  \emph{Combinatorics and topology of proper toric maps}, Journal für die
  reine und angewandte Mathematik (Crelles Journal) \textbf{2018} (2018),
  no.~744, 133--163.

\bibitem[EV86]{Esnault1986}
H{\'e}l{\`e}ne Esnault and Eckart Viehweg, \emph{Logarithmic de rham complexes
  and vanishing theorems}, Inventiones mathematicae \textbf{86} (1986), no.~1,
  161--194.

\bibitem[Max19]{Maxim2019}
Lauren{\c{T}}iu~G. Maxim, \emph{Intersection homology after deligne},
  pp.~93--116, Springer International Publishing, Cham, 2019.

\bibitem[Ros63]{Rosenlicht1963}
Maxwell Rosenlicht, \emph{Questions of rationality for solvable algebraic
  groups over nonperfect fields}, Annali di Matematica Pura ed Applicata,
  Series 4 \textbf{62} (1963), no.~1, 97--120.

\bibitem[Sum74]{Sumihiro1974}
Hideyasu Sumihiro, \emph{{Equivariant completion}}, Journal of Mathematics of
  Kyoto University \textbf{14} (1974), no.~1, 1 -- 28.

\bibitem[Tim11]{Timashev2011}
Dmitry~A. Timashev, \emph{Algebraic homogeneous spaces}, pp.~1--14, Springer
  Berlin Heidelberg, Berlin, Heidelberg, 2011.

\bibitem[Yav92]{Yavin1992}
David Yavin, \emph{On the intersection homology with twisted coefficients of
  toric varieties and the homology with twisted coefficients of the torus},
  Online available at: \url{https://archive.mpim-bonn.mpg.de/id/eprint/1299/},
  1992.

\end{thebibliography}
\end{document}